\documentclass[10pt,twoside]{IEEEtran}
\usepackage {amssymb,rotating,graphicx,cite, calc, color, psfrag}
\usepackage{authblk}

\usepackage[cmex10]{amsmath}
\usepackage{amssymb,amsthm,amsfonts,mathrsfs}
\usepackage{graphicx}
\usepackage{enumitem,stackrel}
\usepackage{color}
\usepackage{mathtools,amssymb,bm}
\usepackage{amstext}
\usepackage{array}
\usepackage{algorithmicx}
\usepackage{hyperref}
\theoremstyle{remark}

\ifCLASSOPTIONcaptionsoff
  \usepackage[nomarkers]{endfloat}
 \let\MYoriglatexcaption\caption
 \renewcommand{\caption}[2][\relax]{\MYoriglatexcaption[#2]{#2}}
\fi
\newcommand{\RN}[1]{%
	\textup{\uppercase\expandafter{\romannumeral#1}}%
}


\newtheoremstyle{mystyle}
  {}
  {}
  {\itshape}
  {}
  {\bfseries}
  {.}
  { }
  {}
\theoremstyle{mystyle}

\newtheorem{theorem}{Theorem}{}
\newtheorem{proposition}{Proposition}{}
{}
{}
\newtheorem{defi}{Definition}{}
{}
\usepackage{epstopdf}

\begin{document}
%
\title{Riemannian Trust Region Method for Haplotype Assembly}

\author{Mohamad Mahdi~Mohades and Mohammad Hossein~Kahaei  \thanks{The authors are with the School of Electrical Engineering, Iran University of Science \& Technology, Tehran 16846-13114, Iran (e-mail: mohamad\_mohaddes@elec.iust.ac.ir; kahaei@iust.ac.ir).}}%

\maketitle

\begin{abstract}
In this letter we model the Haplotype assembly problem (HAP) as a maximization problem over an $(n-1)$-dimensional sphere. Due to nonconvexity of the feasible set, we propose a manifold optimization approach to solve the mentioned maximization problem. To escape local maxima as well as saddle points we utilize trust region method. Simulation results show that our proposed method is with high accuracy in estimation of Haplotype.
\end{abstract}

\begin{IEEEkeywords}
Haplotype assembly, Manifold optimization, Riemannian trust region.
\end{IEEEkeywords}

%
\IEEEpeerreviewmaketitle

\section{Introduction}\label{sec:Introduction}
\IEEEPARstart{H}{aplotype} is a string of single nucleotide polymorphisms (SNPs) of chromosomes \cite{Schwartz_2010}. Haplotypes are useful in studying human evolutionary history and drug discovery and development. Haplotypes can be assembled utilizing sequenced reads, where each read is a fragment of the two chromosomes \cite{bansal2008mcmc}. Such assembly can be performed by solving mathematical models of sequenced reads. For diploid organisms, each SNP site is considered to be either $+1$ or $-1$. This means that the result of haplotype assembly from the sequenced reads ought to be a vector of $\pm 1$ elements. Moreover, the obtained haplotype, say ${\bf{h}}_{n\times1}$, is corresponding to one of the two chromosomes and $-\bf{h}$ is corresponding to the other one.\\
One mathematical approach to find the haplotype is matrix completion. In this approach a read matrix, say ${\bf{M}}_{m\times n}$, containing sequenced reads is created. The elements of this matrix are either $\pm 1$  corresponding to the reads or $\times$ where there is no read. Then, it is required to replace the $\times$ elements of the matrix ${\bf{M}}$ with $+1$ or $-1$ so that the rank of the newly generated matrix, say $\overline{\bf{M}}$, be one. Then the factorization $\overline{\bf{M}}=\overline{\bf{c}}_{m\times1}\overline{\bf{h}}_{n\times1}^T$ gives the haplotype ${\bf{h}}_{n\times1}$ through applying sign function over $\overline{\bf{h}}_{n\times1}$.
When the reads are affected by noise, the sign of some of the entries of the matrix ${\bf{M}}_{m\times n}$ is changed. In this case using the following optimization problem the completed matrix $\overline{\bf{M}}$ is obtained. Please note that in HAP, whether to estimate ${\bf{h}}$ or $-{\bf{h}}$, the estimation is accurate.
\begin{equation} \label{Frobenius_Matrix_Completion}
\begin{array}{l}
\mathop {\min }\limits_{{\bf{X}}}\,\,\,\,\left\| {{{\rm{P}}_\Omega }\left({\bf{M}} \right) - {{\rm{P}}_\Omega }\left( \bf{X} \right)} \right\|_F^2\\
\,\,\,\,\,{\rm{subject \,\,to}}\,\,\,\,\,\, rank({\bf{X}})=1,
\end{array}
\end{equation}
where ${{\rm{P}}_\Omega }$ is the sampling operator acting as follows
\begin{equation} \label{P_Omega}
{{\rm{P}}_\Omega }({\bf{Q}})=\left\{ {\begin{array}{*{20}{c}}
{{{\rm{P}}_\Omega }({q_{ij})} = {q_{ij}} \,\,\,\,\,\,\left( {i,j} \right) \in \Omega }\\
{{{\rm{P}}_\Omega }({q_{ij})} =  0 \,\,\,\,\,\,\left( {i,j} \right) \notin \Omega }
\end{array}} \right.,
\end{equation}
and  $\Omega$ is the set of the observations.
There exist some approaches to solve the problem (\ref{Frobenius_Matrix_Completion}). Based on \cite{Bart}, the problem (\ref{Frobenius_Matrix_Completion}) can be directly solved over the manifold of rank-one matrices. Also, a variation of the formulation of the problem (\ref{Frobenius_Matrix_Completion}) has been solved in \cite{Keshavan_Few_Entries} over the Cartesian product of Grassmann manifolds. In fact, instead optimizing over the variable $\bf{X}$, the problem is solved over the variables $\bf{U}$, $\bf{\Sigma}$ and $\bf{V}$ where ${\bf{X}}={\bf{U}\bf{\Sigma}\bf{V}}^T$ is the singular value decomposition of $\bf{X}$.
Consider the following equivalent optimization problem for the problem (\ref{Frobenius_Matrix_Completion}).
\begin{equation} \label{Frobenius_Matrix_Completion_Equivalent}
\begin{array}{l}
\mathop {\min }\limits_{{\bf{X}}}\,\,\,\,\, rank({\bf{X}})\\
\,\,\,{\rm{subject \,\,to}}\,\,\,\,\,\, { \left\| {{{\rm{P}}_\Omega }\left({\bf{M}} \right) - {{\rm{P}}_\Omega }\left( \bf{X} \right)} \right\|_F^2}<\delta,
\end{array}
\end{equation}
where $\delta$ is a given value. In \cite{cai2010singular} the nonconvex objective function $rank({\bf{X}})$ has been replaced by the nuclear norm and the minimization problem turns into a convex one \cite{Candes_Recht}. Moreover, the problem (\ref{Frobenius_Matrix_Completion}) can be modeled as the following optimization problem.
\begin{equation} \label{Frobenius_factorization}
\mathop {\min }\limits_{{\bf{u}},{\bf{v}}} \,\,\left\| {{{\rm{P}}_\Omega }\left( {\bf{M}} \right) - {{\rm{P}}_\Omega }\left( {{\bf{u}}_{m\times1}{{\bf{v}}_{n\times1}^T}} \right)} \right\|_F^2,
\end{equation}
Alternating minimization is utilized to solve the above nonconvex problem \cite{Changxiao_Cai}.
In \cite{Mohades}, the authors have given an example to show that the proposed objective function of the problem (\ref{Frobenius_Matrix_Completion}) is not a reliable cost function in haplotype assembly. They instead have proposed an objective function to truly model the haplotype assembly problem. Using simulation results they have shown their approach is more accurate than previously proposed algorithms in haplotype assembly .

Apart from the aforementioned optimization approaches, it has been proposed to minimize the following minimum error correction (MEC) function to estimate the haplotype.

\begin{equation} \label{MEC_formulation}
{\rm{MEC}}\left( {{\bf{M}},{\bf{z}}} \right) = \sum\limits_{i = 1}^m {\min \left( {hd\left( {{{\bf{m}}_i},{\bf{z}}} \right),hd\left( {{{\bf{m}}_i}, - {\bf{z}}} \right)} \right)},
\end{equation}
where ${\bf{m}}_i$ is the $i$-th row of $\bf{M}$ and
\begin{equation} \label{Hamming_distance}
hd\left( {{{\bf{m}}_i},{\bf{z}}} \right) = \sum\limits_{\left. j \right|\left( {i,j} \right) \in \Omega }^{} {d\left( {{m_{ij}},{z_j}} \right)},
\end{equation}
where $d(\cdot,\cdot)$ equals $0$ for the same inputs and $1$ otherwise.
It is NP-Hard to obtain the optimal solution of the MEC problem (\ref{MEC_formulation}). Therefore, heuristic methods have been proposed to solve the problem (\ref{MEC_formulation}). For example, in \cite{HapCUT}, a heuristic combinatorial approach is proposed to find the haplotype. However, no provable guarantee for a good solution is proposed therein. Another heuristic approaches can be found in \cite{Puljiz_2016} and \cite{Hashemi}. 
In \cite{SDHAP}, the authors have firstly offered the following optimization problem to find the haplotype,
\begin{equation}\label{SDHAP_Formula}
\begin{array}{l}
\mathop {\max }\limits_{\bf{x}} \,\,\,\,\,{{\bf{x}}^T}{\bf{Wx}}\\
{\rm{subject\,\,to}}\,\,{\rm{ }}x_i^2 = 1
\end{array},
\end{equation}
where ${\bf{W}}$ is the adjacency matrix defined to evaluate the similarity of each pair of rows of the read matrix and $x_i$ is the $i$th entry of $\bf{x}$.
Then, they proposed a semidefinite program and solved it rapidly and accurately.\\
In this paper, we talk about some facts about the haplotype assembly problem (HAP). Then, based on such facts we propose a maximization problem to estimate haplotypes. 
Note that unlike the maximization problem \ref{SDHAP_Formula}, our formulation directly uses the reads and moreover is not a quadratic form. The proposed problem is defined over the $(n-1)$-dimensional sphere.
We use an algorithm to solve the mentioned problem and prove the convergence of the algorithm.
Simulation results illustrate that for  a large observation error probability, our method outweighs some of the previously proposed methods in the sense of haplotype estimation accuracy.

The remainder of the paper is organized as follows. In Section \ref{sec:Preliminary} some preliminaries are introduced to state our main result. Section \ref{Main_results} talks about the main result of the paper. Simulation results are presented in Section \ref{Simulation}. Conclusion is presented in Section \ref{section.conclusion}.

\section{Preliminaries}\label{sec:Preliminary}
In Section \ref{Main_results} we formulate HAP as a manifold optimization problem. To do so, we require some related concepts  as discussed in the following.

\begin{defi} \label{Tangent_space}\cite{Absil_book}
Let $\mathcal{M}$ be a manifold.  The set of all tangent vectors at point ${{\bf{x}}}\in{\mathcal{M}}$  is called the tangent space at point ${\bf{x}}$ and denoted by $T_{{\bf{x}}}{\mathcal{M}}$. Moreover, the disjoint collection of the tangent spaces is called tangent bundle and denoted by $T{\mathcal{M}}$.
\end{defi}

\begin{defi}\label{chart}
A given subset $\mathcal{U}$ of the manifold $\mathcal{M}$ along with a bijective mapping $\varphi$ between  $\mathcal{U}$  and an open subset of  $\mathbb{R}^d$ consist a pair  $\left(\mathcal{U},\varphi \right)$ which is called a $d$-dimensional chart of  $\mathcal{M}$.
\end{defi}

\begin{defi} \label{Diffeomorphism}\cite{Absil_book}
A differentiable bijective mapping between two manifolds is called diffeomorphism provided that its inverse is differentiable too.
\end{defi}

\begin{defi} \label{Parallel_Trans}\cite{Absil_book}
A tangent vector field $\xi$ on $\mathcal{M}$ is a smooth function which assigns to each point of the manifold a tangent vector belonging to tangent bundle. The gradient of a real valued smooth function $f$ over a manifold is an example of vector field which is denoted by ${\rm{grad}}f$. For example, tangent vector field $\xi$ over sphere $S^{n-1}$ is specified as:
\begin{equation}\label{gradient_Project}
\xi_{\bf{x}}={\rm{grad}}f\left( {\bf{x}} \right) = \left( {I - {\bf{x}}{{\bf{x}}^T}} \right){\rm{Grad}} f\left( {\bf{x}} \right), \forall {\bf{x}}\in S^{n-1},
\end {equation}
where ${\rm{Grad}} f\left( {\bf{x}} \right)$ is the gradient over the Euclidean space and $I$ is the identity matrix \cite{Absil_book}.

\end{defi}

\begin{defi}  \cite{Absil_book} \label{manifold_embed}
Let the manifold $\mathcal{N}$ be a subset of the manifold $\mathcal{M}$. When the manifold topology of $\mathcal{N}$ coincides the induced topology of $\mathcal{M}$, $\mathcal{N}$ is called embedded submanifold.
\end{defi}

\begin{defi}\cite{Absil_book}\label{Distance}
A manifold endowed  with a smoothly varying inner product, is called Riemannian manifold. The inner product over the manifold is denoted by either $g$ or $\left<\cdot , \cdot\right>$. Moreover, $\left<\cdot , \cdot\right>_{\bf{x}}$ illustrates the restriction of the inner product to the tangent space $T_{\bf{x}}\mathcal{M}$. Also, the metric induced by this norm is called Riemannian distance and denoted by $\rm{dist}(\cdot,\cdot)$.
\end{defi}

\begin{defi}\cite{Absil_book}\label{geodesic}
A locally distance minimizing curve  over a manifold is called geodesic. Specifically, straight lines over  Euclidean spaces are geodesics.
\end{defi}
\begin{defi}\cite{Absil_book}\label{minimizinggeodesic}
A globally distance minimizing curve over a manifold is called minimizing geodesic. For example, $\alpha$ is the minimizing geodesic between points ${\bf{x}}, {\bf{y}} \in S^{n-1}$ over sphere $S^{n-1}$ as:
\begin{equation}\label{Geodesic_Sphere}
\alpha \left( t \right) = \alpha \left( 0 \right)\cos \left( {\left\| {\dot \alpha \left( 0 \right)} \right\|\upsilon t} \right) + \dot \alpha \left( 0 \right)\frac{1}{{\upsilon\left\| {\dot \alpha \left( 0 \right)} \right\|}}\sin \left( {\left\| {\dot \alpha \left( 0 \right)} \right\|\upsilon t} \right),
\end{equation}
where ${\dot \alpha \left( 0 \right)}\in T_{\bf{x}}{S^{n-1}}$, $\alpha(0)={\bf{x}}$, and  $\alpha(1)={\bf{y}}$. For simplicity we consider $\upsilon=1$.
\end{defi}
\begin{defi}\cite{Absil_book}\label{Exponential_map}
Let $\mathcal{M}$ be a Riemannian manifold whose tangent bundle is $T\mathcal{M}$. Exponential map $\rm{Exp}$ is a mapping from $T\mathcal{M}$ to $\mathcal{M}$ so that for $v\in T\mathcal{M}$, $\rm{Exp(v)}$ is equal to $h$ at time $1$; where $h$ is the unique geodesic starting from the base point of $v$ with velocity $v$ at time $0$.
\end{defi}

\begin{defi} \label{injectivity_radius_defi} \cite{Absil_book}
Injectivity radius of the manifold $\mathcal{M}$ is defined as follows,
\begin {equation} \label{injectivity_radius_formul}
i\left( \mathcal{M} \right): = \mathop {\inf }\limits_{{\bf{x}} \in \mathcal{M}} \sup \left\{ {\varepsilon  > 0:{{\left. {{\rm{Exp}}_{\bf{x}}} \right|}_{{B_\varepsilon }\left( {{0_{\bf{x}}}} \right)}}\,{\rm{is}}\,\,{\rm{diffeomorphism}}} \right\},
\end {equation}
	where ${{\left. {{\rm{Exp}}_{\bf{x}}} \right|}_{{B_\varepsilon }\left( {{0_{\bf{x}}}} \right)}}$ shows the restriction of the mapping ${{\rm{Exp}}_{\bf{x}}}$ to the ball ${{B_\varepsilon }\left( {{0_{\bf{x}}}} \right)}$. Moreover, ${{B_\varepsilon }\left( {{0_{\bf{x}}}} \right)}$ is a normal neighborhood.
\end{defi}

\begin{defi} \label{Lipschitz_cont_diff} \cite{Absil_book}
Let $(\mathcal{M},g)$ own a positive injectivity radius of $i(\mathcal{M})$. Then the real valued function $f$ on $\mathcal{M}$ is $\rm{Lipschitz\,continuously \,differentiable}$ ($L-C^1$) provided that
\begin {enumerate}
\item $f$ is differentiable,
\item $\forall {\bf{x}},{\bf{y}}\in\mathcal{M}$ with ${\rm{dist}}({\bf{x}},{\bf{y}})<i(\mathcal{M})$, there exists $\beta$ for which
\begin{equation} \label{LCD_Ineq}
\left\| {P_\alpha ^{0 \leftarrow 1}{\rm{grad}}f\left( {\bf{y}} \right) - {\rm{grad}}f\left( {\bf{x}} \right)} \right\| \le \beta {\rm{dist}}\left( {{\bf{x}},{\bf{y}}} \right),
\end{equation}
where $\alpha$  is the unique minimizing geodesic satisfying $\alpha(0)={\bf{x}}$ and $\alpha(1)={\bf{y}}$. Moreover, $P_\alpha ^{0 \leftarrow 1}$ is an isometry operator; which translates the tangent vector ${\rm{grad}}f\left( y \right)\in T_{\bf{y}}{\mathcal{M}} $ to the tangent space $T_{\bf{x}}{\mathcal{M}}$ making it possible to differentiate the tangent vectors ${\rm{grad}}f\left( {\bf{y}} \right)$ and  ${\rm{grad}}f\left( {\bf{x}} \right)$, and is called parallel translation. Also, ${\rm{dist}}({\bf{x}},{\bf{y}})$ is obtained by taking infimum over the length of all curves joining ${\bf{x}}$ to ${\bf{y}}$ (see formula (3-30) of \cite{Absil_book}). For example, for the compact embedded submanifold $S^{n-1}$ we have 
\begin{equation}\label{Dist}
{\text{dist}}\left( {{\bf{x}},{\bf{y}}} \right) = {\int_0^1 {\left\langle {\dot \alpha \left( t \right),\dot \alpha \left( t \right)} \right\rangle } ^{\frac{1}{2}}}dt,
\end{equation}
 where $\left\langle {a,b} \right\rangle  = {a^T}b$.

\end {enumerate}
\end{defi}

\begin{proposition}(Lemma 7.4.7 of \cite{Absil_book})\label{Taylor}
Consider ${{B_\varepsilon }\left( {{{\bf{x}}}} \right)}$ as a normal neighborhood of ${\bf{x}}\in\mathcal{M}$ and $\zeta$ as a continuously differentiable tangent vector field over $\mathcal{M}$. Also, let $\alpha$ be the unique minimizing geodesic with $\alpha(0)={\bf{x}}$, $\alpha(1)={\bf{y}}$, and $\dot{\alpha}(1)=\xi$. Then $\forall {\bf{y}}\in {{B_\varepsilon }\left( {{{\bf{x}}}} \right)}$:
\begin{equation}\label{Taylor_Formul}
P_\alpha ^{0 \leftarrow 1}{\zeta _{\bf{y}}} = {\zeta _{\bf{x}}} + {\nabla _\xi }\zeta  + \int_0^1 {\left( {P_\alpha ^{0 \leftarrow \tau }{\nabla _{\dot \alpha \left( \tau  \right)}}\zeta  - {\nabla _\xi }\zeta } \right)} {\text{d}}\tau,
\end{equation}
where $\nabla$ is Riemannian connection which generalizes the concept of directional derivative of a vector field. Specifically, $\nabla$ for Sphere at point ${\bf{x}}$ is as follows:
\begin{equation}\label{Connection_Sphere}
{\nabla _\xi }{\zeta _{\bf{x}}} = \left( {I - {\bf{x}}{{\bf{x}}^T}} \right){\text{D}}{\zeta _{\bf{x}}}\left( {\bf{x}} \right)\left[ \xi  \right],
\end{equation}
where ${\text{D}}{\zeta _{\bf{x}}}\left( {\bf{x}} \right)\left[ \xi  \right]$ is the conventional directional derivative on a Euclidean space.
\end{proposition}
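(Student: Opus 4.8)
The plan is to reduce the statement to the fundamental theorem of calculus inside a single fixed vector space. I would define the curve
\begin{equation}
g(\tau) := P_\alpha^{0 \leftarrow \tau}\,\zeta_{\alpha(\tau)}, \qquad \tau \in [0,1],
\end{equation}
which takes all of its values in the \emph{fixed} tangent space $T_{\bf{x}}\mathcal{M}$, because the parallel translation $P_\alpha^{0 \leftarrow \tau}$ carries $\zeta_{\alpha(\tau)} \in T_{\alpha(\tau)}\mathcal{M}$ back to $T_{\alpha(0)}\mathcal{M} = T_{\bf{x}}\mathcal{M}$. Since $g$ is then a curve in a finite-dimensional vector space, ordinary calculus applies, and the endpoints are immediately identified as $g(0) = \zeta_{\bf{x}}$ and $g(1) = P_\alpha^{0 \leftarrow 1}\zeta_{\bf{y}}$. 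The fundamental theorem of calculus gives $P_\alpha^{0 \leftarrow 1}\zeta_{\bf{y}} = \zeta_{\bf{x}} + \int_0^1 g'(\tau)\,{\rm{d}}\tau$, so everything hinges on computing $g'(\tau)$.

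First I would establish the key identity $g'(\tau) = P_\alpha^{0 \leftarrow \tau}\,\nabla_{\dot\alpha(\tau)}\zeta$, that is, that differentiating the pulled-back field reproduces the pull-back of the covariant derivative. The cleanest route is to introduce a parallel orthonormal frame $\{E_i(\tau)\}_i$ along $\alpha$, characterized by $\nabla_{\dot\alpha(\tau)} E_i = 0$, and to expand $\zeta_{\alpha(\tau)} = \sum_i c_i(\tau) E_i(\tau)$ with $C^1$ coefficients $c_i$. Because the frame is parallel, the Leibniz rule for the connection $\nabla$ yields $\nabla_{\dot\alpha(\tau)}\zeta = \sum_i c_i'(\tau) E_i(\tau)$, while parallel translation sends each frame vector to its value at $\tau=0$, so that $g(\tau) = \sum_i c_i(\tau) E_i(0)$. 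Differentiating the latter in the fixed space $T_{\bf{x}}\mathcal{M}$ gives $g'(\tau) = \sum_i c_i'(\tau) E_i(0) = P_\alpha^{0 \leftarrow \tau}\big(\sum_i c_i'(\tau) E_i(\tau)\big) = P_\alpha^{0 \leftarrow \tau}\nabla_{\dot\alpha(\tau)}\zeta$, as claimed.

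Substituting this into the identity above produces $P_\alpha^{0 \leftarrow 1}\zeta_{\bf{y}} = \zeta_{\bf{x}} + \int_0^1 P_\alpha^{0 \leftarrow \tau}\nabla_{\dot\alpha(\tau)}\zeta\,{\rm{d}}\tau$. To reach the stated form I would then add and subtract the fixed vector $\nabla_\xi\zeta \in T_{\bf{x}}\mathcal{M}$ (with $\xi = \dot\alpha(0)$, the velocity of the geodesic, which is what makes $\nabla_\xi\zeta$ live in $T_{\bf{x}}\mathcal{M}$) inside the integral; since this vector is independent of $\tau$, one has $\int_0^1 \nabla_\xi\zeta\,{\rm{d}}\tau = \nabla_\xi\zeta$, and regrouping yields exactly (\ref{Taylor_Formul}). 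The $C^1$ hypothesis on $\zeta$ guarantees that the $c_i$ are $C^1$, hence the integrand is continuous, justifying the use of the fundamental theorem of calculus, and the restriction ${\bf{y}} \in B_\varepsilon({\bf{x}})$ to a normal neighborhood guarantees that the minimizing geodesic $\alpha$, and with it the parallel translation along $\alpha$, is uniquely and smoothly defined.

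The step I expect to be the main obstacle is the identity $g'(\tau) = P_\alpha^{0 \leftarrow \tau}\nabla_{\dot\alpha(\tau)}\zeta$, since it is precisely where the geometry enters, namely the compatibility of the Riemannian connection with parallel translation encoded by the parallel frame $\{E_i\}$. Once it is in hand, the remainder is bookkeeping with the fundamental theorem of calculus and a constant vector.
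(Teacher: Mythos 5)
Your proof is correct, and it is essentially the canonical argument: the paper itself states this proposition without proof (it is imported verbatim as Lemma 7.4.7 of \cite{Absil_book}), and the proof in that reference proceeds exactly as you do, by applying the fundamental theorem of calculus to the pulled-back curve $g(\tau) = P_\alpha^{0 \leftarrow \tau}\zeta_{\alpha(\tau)}$ in the fixed space $T_{\bf{x}}\mathcal{M}$ together with the identity $g'(\tau) = P_\alpha^{0 \leftarrow \tau}\nabla_{\dot\alpha(\tau)}\zeta$, which your parallel-frame computation establishes correctly. One remark worth making explicit: you silently and rightly take $\xi = \dot\alpha(0)$, whereas the paper's statement reads $\dot\alpha(1) = \xi$; that is a typo in the paper (the cited lemma has $\xi = \mathrm{Exp}_{\bf{x}}^{-1}{\bf{y}} = \dot\alpha(0)$), and your observation that $\nabla_\xi\zeta$ must lie in $T_{\bf{x}}\mathcal{M}$ for the formula to parse is precisely the reason the correction is forced.
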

\begin{defi}\cite{Absil_book}\label{Hessian}
The Riemannian Hessian of the real valued function $g$ on the Riemannian manifold $\mathcal{M}$ at ${\bf{x}}\in\mathcal{M}$ is defined as follows:
\begin{equation}
{\rm{Hess}}f({\bf{x}}):T_{\bf{x}}\mathcal{M}\to T_{\bf{x}}\mathcal{M}:\xi\to{\nabla _\xi }{\rm{grad}}f, \forall{\xi\in T_{\bf{x}}\mathcal{M}}.
\end{equation}
\end{defi}

To solve a manifold optimization problem, Riemannian Trust Region (RTR) method can be utilized. RTR methods utilize second order geometry of the cost function which lets them escape saddle points and obtain better results compared to first order line search methods \cite{Absil_book}. RTR algorithm to minimize given cost function $f$ over the manifold $\mathcal{M}$ is presented in Table 1.  Before presenting RTR algorithm, let us define the trust region subproblem.
\begin{defi} \label{RTR_Subproblem_Defi}\cite{Absil_book}
Let $f({\bf{x}})$ be a real valued cost function over the manifold $\mathcal{M}$, the following quadratic optimization problem is called trust region subproblem,
\begin{equation}\label{RTR_Subproblem}
\begin{array}{*{20}{l}}
\hspace{-.7cm}{\mathop {\min }\limits_{\eta  \in {T_{{{\bf{x}}}}}{\cal M}} {{\hat m}_{{{\bf{x}}}}}\left( \eta  \right) = f\left( {{{\bf{x}}}} \right) + \left\langle {{\rm{grad}}f\left( {{{\bf{x}}}} \right),\eta } \right\rangle_{{{\bf{x}}}}  + 0.5\left\langle {{H}\left[ \eta  \right],\eta } \right\rangle_{{{\bf{x}}}} }\\
{{\mkern 1mu} {\mkern 1mu} {\mkern 1mu} {\mkern 1mu} {\mkern 1mu} {\mkern 1mu} {\mkern 1mu} {\mkern 1mu} {\mkern 1mu} {\mkern 1mu} {\mkern 1mu} {\mkern 1mu} {\mkern 1mu} {\mkern 1mu} {\mkern 1mu} {\mkern 1mu} {\mkern 1mu} {\mkern 1mu} {\mkern 1mu} {\mkern 1mu} {\mkern 1mu} {\mkern 1mu} {\mkern 1mu} {\mkern 1mu} {\mkern 1mu} {\mkern 1mu} {\mkern 1mu} {\mkern 1mu} {\mkern 1mu} {\mkern 1mu} {\rm{s}}.{\rm{t}}.{\mkern 1mu} {\mkern 1mu} {\mkern 1mu} {\mkern 1mu} {\mkern 1mu} {\mkern 1mu} {\mkern 1mu} {{\left\langle {\eta ,\eta } \right\rangle }_{{{\bf{x}}}}} \le \Delta^2}
\end{array},
\end{equation}
where $H$ is some symmetric operator on ${T_{{{\bf{x}}}}}\mathcal{M}$.
\end{defi}
\begin{center} \label{RTR}
\hspace{-1.1cm} \begin{tabular}{|c|c|}
 \hline
\hspace{-1.2cm}\textbf{Algorithm 1}: Riemannian trust region algorithm\\
 \hline
\hspace{-.6cm}\textbf{Requirements}:Riemannian manifold $(\mathcal{M},g)$; Cost function $f$ \\on $\mathcal{M}$; initial point ${\bf{x}}_0 \in \mathcal{M}$, Retraction function $R$, Scalars \\ $\bar{\Delta}>0$, $\Delta_0\in(0,\bar{\Delta})$ and $\rho'\in[0,0.25)$. \\
\hspace{-6.6cm}{\bf{for}}  $k=0,1,2,...$  {\bf{do}}\\
\hspace{-3.8cm}\textbf{Step 1}: Find $\eta_k$ by solving Problem \ref{RTR_Subproblem}. \\
\hspace{-3.5cm}\textbf{Step 2}: Evaluate ${\rho _k} = \frac{{f\left( {{{\bf{x}}_k}} \right) - f\left( {{R_{{{\bf{x}}_k}}}\left( {{\eta _k}} \right)} \right)}}{{{{\hat m}_{{{\bf{x}}_k}}}\left( {{0_{{{\bf{x}}_k}}}} \right) - {{\hat m}_{{{\bf{x}}_k}}}\left( {{\eta _k}} \right)}}$.\\
\hspace{-3cm}\textbf{Step 3}: $\bf{If}$ $\rho_k<0.25$ $\bf{then}$ $\Delta_{k+1} =0.25\Delta_k$,\\
$\bf{else if}$ $\rho_k>0.75$ and $\|\eta_k\|=\Delta_k$ $\bf{then}$, $\Delta_{k+1}={\rm{min}}(2\Delta_k,\bar{\Delta})$ \\
$\bf{else}$ $\Delta_{k+1}=\Delta_k$, $\bf{end\,if}$.\\
\hspace{-.2cm}\textbf{Step 4}: If $\rho_k>\rho'$ then ${\bf{x}}_{k+1}=R_{\bf{x}}(\eta_k)$ else ${\bf{x}}_{k+1}={\bf{x}}_k$; $\bf{end\,for}$\\
\hline
\end{tabular}
\end{center}

\begin{proposition}  \cite{Absil_book} \label{Convergence_Alg1}
Let $\{{\bf{x}}_k\}$ be a sequence generated by Alg. 1. Also, the following conditions are satisfied, 
\begin{enumerate}
\item Mapping $f$ is $C^1$ and bounded below at the level set $\{{\bf{x}}\in{\mathcal{M}}:f({\bf{x}})\leq f({\bf{x}}_0)\}$,
\item Mapping $f$ is $L-C^1$,
\item There exist $\mu > 0$ and $\delta_{\mu}>0$ such that the retraction function $R:T\mathcal{M}\to\mathcal{M}$ (see Definition 4.1.1 of \cite{Absil_book}) satisfies
\begin{equation} \label{Retraction_Condition}
\left\| \xi  \right\| \ge \mu {\rm{dist}}\left( {{\bf{x}},{R_{\bf{x}}}\xi } \right)\,\,\forall {\bf{x}} \in \mathcal{M},\forall \xi  \in {T_{\bf{x}}} \mathcal{M},\left\| \xi  \right\| \le {\delta _\mu },
\end {equation}

\item Mapping  $\widehat{f}:T\mathcal{M}\to{\mathbb{R}}:\xi\to f(R\xi)$ is radially $L-C^1$, i.e.,
$\exists {\beta _{RL}},{\delta _{RL}} > 0$ such that
\begin {equation}\left| {\frac{d}{{d\tau }}{{\hat f}_{\bf{x}}}\left( {\tau \xi } \right){|_{\tau  = t}} - \frac{d}{{d\tau }}{{\hat f}_{\bf{x}}}\left( {\tau \xi } \right){|_{\tau  = 0}}} \right| \le {\beta _{RL}}t,
\end{equation}
for all ${\bf{x}}\in\mathcal{M}$, $t<\delta_{RL}$ and $\xi \in T_{\bf{x}}\mathcal{M}$ with $\|\xi\|=1$.
\item There is a constant $\beta_H$ such that $\|H_k\|\leq\beta_H$ for all $k$, where $H_k$ is the symmetric operator defined in Definition \ref{RTR_Subproblem} at iteration $k$.\\
\item Any $\eta_k$ obtained in Step 1 of Alg. 1 satisfies inequality
\begin{equation} \label{Cauchy}
\begin{array}{l}
{{\hat m}_{{{\bf{x}}_k}}}\left( 0 \right) - {{\hat m}_{{{\bf{x}}_k}}}\left( {{\eta _k}} \right) \ge \\
\,\,\,\,\,\,\,\,\,\,\,\,\,{c_1}\left\| {{\rm{grad}}f\left( {{{\bf{x}}_k}} \right)} \right\|\min \left( {{\Delta _k},\frac{{\left\| {{\rm{grad}}f\left( {{{\bf{x}}_k}} \right)} \right\|}}{{\left\| {{H_k}} \right\|}}} \right)
\end{array},
\end{equation}
for some constant $c_1>0$, where ${{\left\| {{H_k}} \right\|}}$ is the operator norm of $ {{H_k}} $.
Then, the following holds:
\begin{equation}
\mathop {\lim }\limits_{k \to \infty } {\rm{grad}}f\left( {{{\bf{x}}_k}} \right) = 0.
\end{equation}

\end{enumerate}

\end{proposition}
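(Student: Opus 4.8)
The plan is to adapt the classical global-convergence argument for trust-region methods to the Riemannian setting and to prove the claim by contradiction. The driving mechanism is a quantitative bound on the agreement ratio $\rho_k$ of Step 2: I will show that whenever $\|{\rm{grad}}f({\bf{x}}_k)\|$ is bounded away from zero and the radius $\Delta_k$ is small, the quadratic model $\hat m_{{\bf{x}}_k}$ of problem \ref{RTR_Subproblem} predicts the actual decrease so accurately that $\rho_k\ge 0.25$ is forced, so that the step is accepted and $\Delta_k$ is not contracted.

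First I would quantify the model mismatch. Writing $\hat f_{\bf{x}}(\eta)=f(R_{\bf{x}}\eta)$ and using the local rigidity of the retraction ($DR_{\bf{x}}(0_{\bf{x}})={\rm{id}}$), one has $\frac{d}{d\tau}\hat f_{\bf{x}}(\tau\xi)|_{\tau=0}=\langle{\rm{grad}}f({\bf{x}}),\xi\rangle$. Integrating the radially $L$-$C^1$ bound of Condition 4 along the ray $\tau\mapsto\tau\xi$ with $\|\xi\|=1$ yields
\[
\left|\hat f_{\bf{x}}(\eta)-f({\bf{x}})-\langle{\rm{grad}}f({\bf{x}}),\eta\rangle\right|\le\tfrac{1}{2}\beta_{RL}\|\eta\|^2 .
\]
Subtracting the quadratic term of $\hat m_{\bf{x}}$ and using the Hessian bound $\|H_k\|\le\beta_H$ of Condition 5 gives
\[
\left|f(R_{{\bf{x}}_k}\eta_k)-\hat m_{{\bf{x}}_k}(\eta_k)\right|\le C\Delta_k^2,\qquad C:=\tfrac{1}{2}(\beta_{RL}+\beta_H).
\]
Since $\rho_k-1$ is this mismatch divided by the predicted decrease $\hat m_{{\bf{x}}_k}(0)-\hat m_{{\bf{x}}_k}(\eta_k)$, the Cauchy-type estimate \ref{Cauchy} gives
\[
|\rho_k-1|\le\frac{C\Delta_k^2}{c_1\|{\rm{grad}}f({\bf{x}}_k)\|\,\min\!\big(\Delta_k,\|{\rm{grad}}f({\bf{x}}_k)\|/\beta_H\big)} .
\]

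Next I would run the $\liminf$ argument. Suppose, for contradiction, that $\|{\rm{grad}}f({\bf{x}}_k)\|\ge\epsilon>0$ for all $k$ past some index. When $\Delta_k\le\epsilon/\beta_H$ the bound above collapses to $C\Delta_k/(c_1\epsilon)$, hence falls below $0.75$ once $\Delta_k<\bar\delta$ for a suitable threshold $\bar\delta$; by Step 3 the radius is then never contracted, so $\Delta_k$ stays bounded below by a positive constant $\Delta_{\min}$ throughout this tail. Standard trust-region bookkeeping then yields infinitely many iterations with $\rho_k\ge 0.25$ (the radius cannot contract forever against this lower bound), each of which is accepted and, by \ref{Cauchy}, decreases $f$ by at least $0.25\,c_1\epsilon\min(\Delta_{\min},\epsilon/\beta_H)>0$. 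Summing over these infinitely many steps sends $f$ to $-\infty$, contradicting the boundedness below of Condition 1; therefore $\liminf_{k\to\infty}\|{\rm{grad}}f({\bf{x}}_k)\|=0$.

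The hard part, and the step I expect to be the main obstacle, is upgrading this $\liminf$ to the full limit. I would argue by contradiction once more: if $\limsup_k\|{\rm{grad}}f({\bf{x}}_k)\|\ge 2\epsilon$ while $\liminf=0$, there are infinitely many disjoint index stretches across which $\|{\rm{grad}}f\|$ climbs from below $\epsilon$ up to $2\epsilon$. On each stretch I would telescope the gradient variation using the $L$-$C^1$ inequality \ref{LCD_Ineq} (which controls $\|P_\alpha^{0\leftarrow1}{\rm{grad}}f({\bf{x}}_{k+1})-{\rm{grad}}f({\bf{x}}_k)\|$ by $\beta\,{\rm{dist}}({\bf{x}}_k,{\bf{x}}_{k+1})$) to obtain $\sum{\rm{dist}}({\bf{x}}_k,{\bf{x}}_{k+1})\ge\epsilon/\beta$ over the stretch; then the retraction inequality \ref{Retraction_Condition}, giving $\|\eta_k\|\ge\mu\,{\rm{dist}}({\bf{x}}_k,{\bf{x}}_{k+1})$ on accepted steps and $\Delta_k\ge\|\eta_k\|$, converts this into a uniform lower bound $\sum_k\min(\Delta_k,\epsilon/\beta_H)\ge c'$ over each stretch. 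Plugging this into \ref{Cauchy} forces a fixed positive cumulative decrease of $f$ on every stretch, and summing over the infinitely many stretches once again contradicts Condition 1, yielding $\lim_{k\to\infty}{\rm{grad}}f({\bf{x}}_k)=0$. The delicate bookkeeping is exactly this telescoping estimate linking gradient variation, step length, and accumulated descent, including the careful treatment of parallel transport in the gradient-difference sum.
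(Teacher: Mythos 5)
Your reconstruction is correct and is essentially the argument the paper relies on: the paper gives no proof of this proposition at all but quotes it from \cite{Absil_book} (it is Theorems 7.4.2 and 7.4.4 there), whose proof proceeds exactly as you propose --- a model-mismatch bound forcing $\rho_k$ close to $1$ at small radii, hence a lower bound on $\Delta_k$ and a $\liminf$ contradiction via the Cauchy decrease, followed by the $\epsilon$-to-$2\epsilon$ stretch argument combining the $L$-$C^1$ inequality with the retraction bound (your rearrangement, deriving a fixed per-stretch decrease rather than letting $f$'s convergence shrink the traveled distance, is an equivalent reshuffling of the same inequalities, modulo routine bookkeeping for steps with $\|\eta_k\|>\delta_\mu$ or $\mathrm{dist}>i(\mathcal{M})$). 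The one caveat worth recording is that your final step (a fixed decrease on every accepted step of a stretch) implicitly needs $\rho'>0$, which is precisely the hypothesis $\rho'\in\left(0,\tfrac{1}{4}\right)$ of Theorem 7.4.4 in \cite{Absil_book}, whereas Algorithm 1 as printed allows $\rho'=0$, in which case only the $\liminf$ conclusion survives.
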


\section {Main result} \label{Main_results}
In this section we fistly present some facts in regard of HAP. Then, we propose an optimization problem to estimate haplotypes. Finally, we discuss the convergence of an algorithm for solving the proposed optimization problem.\\
Let $\bf{M}$ be the \underline{noiseless} read matrix and  $\overline{\bf{M}}$ be the completion of $\bf{M}$. Then, the following statements hold.
\begin{enumerate}
\item $\overline{\bf{M}}$ is a rank one matrix for which there exists the factorization $\overline{\bf{M}}=\overline{\bf{c}}_{m\times1}\overline{\bf{h}}_{n\times1}^T$.
\item There is no difference for HAP to estimate ${\bf{h}}$ as $sign(\overline{\bf{h}})$ or $sign(-\overline{\bf{h}})$.  In other words, $\bf{h}$ is equivalent to $-\bf{h}$. \item There exists a unique maximizer for the problem,
\begin{equation} \label{Example_Maximize}
\mathop {\max }\limits_{\scriptstyle{\bf{x}} \in {\mathbb{R}^n,}\hfill\atop
\scriptstyle{\left\| {\bf{x}} \right\|_2} \le 1\hfill} \,\,\,{\left\| {{\overline{\bf{M}}}{\bf{x}}} \right\|_1}
\end{equation} up to the equivalence of $\bf{h}$ and $-\bf{h}$.
\item It can be verified that (\ref{Example_Maximize}) is equivalent to the following optimization problem over Sphere $S^{n-1}$,
\begin{equation} \label{Example_Maximize_Equiv}
\mathop {\max }\limits_{\scriptstyle{\bf{x}} \in {S^{n-1}}\hfill} \,\,\,{\left\| {{\overline{\bf{M}}}{\bf{x}}} \right\|_1}.
\end{equation}
\end{enumerate}
We therefore propose the next optimization problem to estimate the haplotype for the noisy HAP,
\begin{equation} \label{Proposed_Maximize}
\mathop {\max }\limits_{\scriptstyle{\bf{x}} \in {S^{n-1}}\hfill} \,\,\,{\left\| {{\left({{\rm{P}}_\Omega }\left({\bf{M}} \right)\right)}{\bf{x}}} \right\|_1},
\end{equation}
Please note that,  even though objective function of Problem \ref{Example_Maximize} is convex, we intend to maximize the objective function. Therefore, the solution of the optimization problem cannot be trivially obtained through convex optimization approaches. Moreover, due to Alg. 1 is a descent algorithm, we rewrite Problem (\ref{Proposed_Maximize}) in the following form:
\begin{equation} \label{Proposed_Minimize}
\mathop {\min }\limits_{\scriptstyle{\bf{x}} \in {S^{n-1}}\hfill} \,\,\,-{\left\| {{\left({{\rm{P}}_\Omega }\left({\bf{M}} \right)\right)}{\bf{x}}} \right\|_1}.
\end{equation}

 Let us make the objective function of  (\ref{Proposed_Minimize}) differentiable to easily use smooth optimization approaches for finding the solution. For this purpose, we propose the subsequent problem,
\begin{equation} \label{Proposed_Maximize_Diff}
\mathop {\min }\limits_{{\bf{x}} \in {S^{n - 1}}} \,\,\,\,\,f({\bf{x}})=-\sum\limits_{i = 1}^m {{{\left( {{{\left( { {{{\bf{M}}_{{\Omega _i}}}{\bf{x}}}} \right)}^2} + \varepsilon } \right)}^{\frac{1}{2}}}},
\end{equation}
where ${{\bf{M}}_{{\Omega _i}}}$ is the $i$th row of the matrix ${{\rm{P}}_\Omega }\left({\bf{M}} \right)$ and $\varepsilon$ is a very small positive value. Please note that for $\varepsilon=0$, the objective function of (\ref{Proposed_Maximize_Diff}) turns into the objective function of (\ref{Proposed_Minimize}).
 
\begin{theorem} \label{our_Theorem}
For the optimization problem (\ref{Proposed_Maximize_Diff}), Conditions of Proposition \ref{Convergence_Alg1} are satisfied and consequently Alg. 1 converges.
\end {theorem}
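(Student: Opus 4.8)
The plan is to verify the six hypotheses of Proposition \ref{Convergence_Alg1} one at a time, leaning on two structural facts: that the feasible set $S^{n-1}$ is a compact embedded submanifold with positive injectivity radius $i(S^{n-1})=\pi$, and that the regularizer $\varepsilon>0$ keeps the argument of every square root strictly positive. The latter is the crux of smoothness: since $(\mathbf{M}_{\Omega_i}\mathbf{x})^2+\varepsilon\ge\varepsilon>0$ for all $\mathbf{x}$, the map $t\mapsto\sqrt{t+\varepsilon}$ is evaluated only where it is $C^\infty$, so $f$ is a finite sum of smooth functions and is itself $C^\infty$ on $S^{n-1}$. This immediately gives Condition 1 (in particular $f$ is $C^1$), and boundedness below follows because a continuous function on the compact set $S^{n-1}$ attains its minimum.

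For Condition 2 I would compute the Euclidean gradient $\mathrm{Grad}\,f(\mathbf{x})=-\sum_i (\mathbf{M}_{\Omega_i}\mathbf{x})\,\mathbf{M}_{\Omega_i}^T/\sqrt{(\mathbf{M}_{\Omega_i}\mathbf{x})^2+\varepsilon}$ and project it via (\ref{gradient_Project}) to obtain $\mathrm{grad}\,f$; both are smooth vector fields precisely because the denominators never vanish. The $L$-$C^1$ property then follows from the general principle that a smooth function on a compact manifold has a continuous, hence operator-norm-bounded, Riemannian Hessian, which controls the rate of change of $\mathrm{grad}\,f$ along geodesics and yields a finite constant $\beta$ in (\ref{LCD_Ineq}); the positive injectivity radius guarantees that the unique minimizing geodesic and the parallel translation required by Definition \ref{Lipschitz_cont_diff} exist.

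For Conditions 3 and 4 I would take the retraction to be the exponential map of Definition \ref{Exponential_map} (any smooth retraction would also do). For the exponential map one has $\mathrm{dist}(\mathbf{x},R_{\mathbf{x}}\xi)=\|\xi\|$ whenever $\|\xi\|<\pi$, so (\ref{Retraction_Condition}) holds with $\mu=1$ and any $\delta_\mu<\pi$; and since $f$ and $R$ are both smooth, $\widehat{f}=f\circ R$ is smooth, its radial $L$-$C^1$ property again reducing to boundedness of a continuous second-order quantity over the compact sphere. For Condition 5 I choose $H_k=\mathrm{Hess}\,f(\mathbf{x}_k)$ from Definition \ref{Hessian}; this is a continuous field of symmetric operators on a compact manifold, so $\beta_H:=\max_{\mathbf{x}\in S^{n-1}}\|\mathrm{Hess}\,f(\mathbf{x})\|$ is finite and bounds $\|H_k\|$ uniformly. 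Finally, Condition 6 (the Cauchy decrease (\ref{Cauchy})) is a property not of $f$ but of the subproblem solver in Step 1, and is met by any solver returning a point at least as good as the Cauchy point, as guaranteed in \cite{Absil_book}.

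The main obstacle is Conditions 2 and 4: although on a compact manifold both collapse to boundedness arguments, stating them rigorously requires care with the parallel-transport form of (\ref{LCD_Ineq}) and the radial-derivative form, together with an appeal to Taylor's formula (Proposition \ref{Taylor}) to convert the bounded Riemannian connection $\nabla_\xi\,\mathrm{grad}\,f$ into the required Lipschitz estimates. Everything else is routine once the smoothness of $f$ — which hinges entirely on $\varepsilon>0$ — and the compactness of $S^{n-1}$ are in hand.
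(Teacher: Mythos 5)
Your proposal is correct, but it reaches the six conditions by a genuinely different route than the paper. The paper proceeds by explicit computation throughout: it derives the Euclidean gradient (\ref{Euclid_Grad_f}), feeds it into the sphere's connection (\ref{Connection_Sphere}) and the Taylor formula (\ref{Taylor_Formul}) to extract a concrete Lipschitz constant $\beta = n\sum_{i=1}^m \left\| \mathbf{M}_{\Omega_i}^T\mathbf{M}_{\Omega_i}/\sqrt{\varepsilon} \right\|_F$ for Condition 2, uses the projection retraction $R_{\mathbf{x}}(\xi)=(\mathbf{x}+\xi)/\|\mathbf{x}+\xi\|_2$ for Conditions 3--4 (computing $\mathrm{dist}(\mathbf{x},R_{\mathbf{x}}\xi)=|\cos^{-1}((1+\|\xi\|_2^2)^{-1/2})|$ in closed form), and bounds the Hessian explicitly by $\beta_H=\frac{n^2}{\varepsilon}\bigl(\sum_{i=1}^m\|\mathbf{M}_{\Omega_i}\|_2^2\bigr)^2$. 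You instead argue softly: $\varepsilon>0$ makes $f$ globally $C^\infty$, so $\mathrm{grad}\,f$ and $\mathrm{Hess}\,f$ are continuous and hence bounded on the compact sphere, and the $L$-$C^1$ and radial $L$-$C^1$ estimates follow from Proposition \ref{Taylor}; you also pick the exponential retraction, for which Condition 3 is immediate with $\mu=1$. Your route is shorter and in one place more rigorous: the paper's step (\ref{dist_exact}), asserting $\mathrm{dist}(\mathbf{x},\mathbf{y})=\|\mathrm{grad}f(\mathbf{x})\|_2$, only holds for the particular endpoint $\mathbf{y}$ reached from the initial velocity $\dot\alpha(0)=\mathrm{grad}f(\mathbf{x})$, whereas Definition \ref{Lipschitz_cont_diff} demands the bound for \emph{all} nearby pairs; the clean general argument is exactly yours, namely $\|\dot\alpha(\tau)\|=\mathrm{dist}(\mathbf{x},\mathbf{y})$ along the minimizing geodesic together with a uniform Hessian bound. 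Your approach also avoids the paper's merely ``numerical'' verification of (\ref{Retraction_Condition}) and its slip in Condition 4, where the stated constant $\beta_{RL}=t(\cdots)$ wrongly carries the factor $t$. What the paper's computations buy in exchange is explicit constants in terms of $\mathbf{M}$, $n$ and $\varepsilon$, which matter if one wants quantitative trust-region behavior rather than bare convergence. Two touch-ups for your write-up: in Condition 4 the boundedness argument must run over the compact set of unit tangent vectors $\{(\mathbf{x},\xi):\mathbf{x}\in S^{n-1},\ \xi\in T_{\mathbf{x}}S^{n-1},\ \|\xi\|=1\}$ together with $t\in[0,\delta_{RL}]$, not merely over $S^{n-1}$ itself; and whichever retraction you fix in Conditions 3--4 must be the one actually executed in Step 4 of Alg.\ 1, so your parenthetical ``any smooth retraction would also do'' should be read as licensing a consistent swap, not a mix.
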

\begin{proof}
We require to prove that the conditions of Proposition \ref{Convergence_Alg1} are satisfied for $f({\bf{x}})$ in problem (\ref{Proposed_Maximize_Diff}).\\
{\bf{Condition 1;}} It is easy to see that $f({\bf{x}})$ is continuous. Let $(\mathcal{U},\varphi_i)$ be a chart for $S^{n-1}$, then $f$ is differentiable over $S^{n-1}$ if $f\circ \varphi_i^{-1}$ is differentiable \cite{Absil_book}. Let ${\bf{x}}=(x_1,\cdots,x_n)\in{S}^n$, then, a straightforward choice for $\varphi_i$ is to choose $\varphi_i$ so that $\varphi_i({\bf{x}})=(x_1,\cdots,x_{i-1},x_{i+1},\cdots,x_n)$, when $x_i\neq0$. Accordingly, it is easy to see that $f\circ \varphi_i^{-1}$ is differentiable and subsequently $f$ is differentiable. Therefore, $f({\bf{x}})$ is $C^1$. Moreover, for ${\bf{x}}\in{S^{n-1}}$ it is obvious that $f({\bf{x}})$ is lower bounded.\\
{\bf{Condition 2;}}  Let $\alpha(t)$ be the geodesic defined in Definition (\ref{minimizinggeodesic}) with $\upsilon=1$, and distance function be as Equation (\ref{Dist}).
Now, let  $\xi(0)=\dot{\alpha}(0)={\rm{grad}}f({\bf{x}})$. Being aware of the fact that ${{\bf{x}}^T}\rho  = 0,\,\,\,\,\forall \rho  \in {T_{\bf{x}}}{S^{n - 1}}$, specifically for $\rho={\rm{grad}}f({\bf{x}})$, it is easy to verify that,
\begin{equation}\label{dist_exact}
\begin{gathered}
  {\text{dist}}\left( {{\bf{x}},{\bf{y}}} \right) = {\left\| {{\text{grad}}f\left( {\bf{x}} \right)} \right\|_2}.
\end{gathered}
\end{equation}
Then, by considering $\zeta$ as the gradient vector field in equality (\ref{Taylor_Formul}), we have:
\begin{equation}\label{Verifying_cond_2}
\begin{gathered}
  \left\| {P_\alpha ^{0 \leftarrow 1}{\zeta _{\bf{y}}} - {\zeta _{\bf{x}}}} \right\| = \left\| {\int_0^1 {P_\alpha ^{0 \leftarrow \tau }{\nabla _{\dot \alpha \left( \tau  \right)}}\zeta } {\text{d}}\tau } \right\| \hfill \\
  \,\,\,\,\,\,\,\,\,\,\,\,\,\,\,\,\,\,\,\,\,\,\,\,\,\,\,\,\,\,\, \leqslant {\int_0^1 {\left\| {{\nabla _{\dot \alpha \left( \tau  \right)}}\zeta } \right\|} _2}{\text{d}}\tau,  \hfill \\ 
\end{gathered}
\end{equation}
where we used the isometry property of ${P_\alpha ^{0 \leftarrow \tau }}$. Using Equation (\ref{Connection_Sphere}), we have:
\begin{equation}
\begin{gathered}
  {\nabla _{\dot \alpha \left( \tau  \right)}}\zeta  = \left( {I - \alpha \left( \tau  \right)\alpha {{\left( \tau  \right)}^T}} \right)\mathop {\lim }\limits_{t \to 0} \left( {\frac{{{\zeta _{\alpha \left( \tau  \right) + t\dot \alpha \left( \tau  \right)}} - {\zeta _{\alpha \left( \tau  \right)}}}}{t}} \right).\hfill \\
\end{gathered}
\end{equation}
By calculating the Euclidean gradient for the cost function of Problem (\ref{Proposed_Maximize_Diff}) as
\begin{equation}\label{Euclid_Grad_f}
  {\text{Grad}}f\left( {\alpha \left( \tau  \right)} \right) =  - \sum\limits_{i = 1}^m {\left( {{\mathbf{M}}_{{\Omega _i}}^T{{\mathbf{M}}_{{\Omega _i}}}\alpha \left( \tau  \right)} \right){{\left( {{{\left( {{{\mathbf{M}}_{{\Omega _i}}}\alpha \left( \tau  \right)} \right)}^2} + \varepsilon } \right)}^{ - \frac{1}{2}}}},
\end{equation}
it is not difficult to verify that 
\begin{equation}
\mathop {\lim }\limits_{t \to 0} \left( {\frac{{{\zeta _{\alpha \left( \tau  \right) + t\dot \alpha \left( \tau  \right)}} - {\zeta _{\alpha \left( \tau  \right)}}}}{t}} \right) = \sum\limits_{i = 1}^m {\left( {\frac{{ - {\mathbf{M}}_{{\Omega _i}}^T{{\mathbf{M}}_{{\Omega _i}}}\dot \alpha \left( \tau  \right)}}{{{{\left( {{{\left( {{{\mathbf{M}}_{{\Omega _i}}}\alpha \left( \tau  \right)} \right)}^2} + \varepsilon } \right)}^{{1 \mathord{\left/
 {\vphantom {1 2}} \right.
 \kern-\nulldelimiterspace} 2}}}}}} \right)}.
\end{equation}
Then, using Equality (\ref{gradient_Project}), Cauchy-Shwarz inequality and considering that ${\left\| {\dot \alpha \left( \tau \right)} \right\|_2} = {\left\| {{\text{grad}}f\left( {\bf{x}} \right)} \right\|_2}$ and ${\left\| {\left( {I - \alpha \left( \tau  \right)\alpha {{\left( \tau  \right)}^T}} \right)} \right\|_F} \leqslant n$, we will have:
\begin{equation}\label{Hessian_Verify}
\int_0^1 {{{\left\| {{\nabla _{\dot \alpha \left( \tau  \right)}}\zeta } \right\|}_2}} {\text{d}}\tau  \leqslant n{\sum\limits_{i = 1}^m {\left\| {\frac{{\left( {{\mathbf{M}}_{{\Omega _i}}^T{{\mathbf{M}}_{{\Omega _i}}}} \right)}}{{\sqrt \varepsilon  }}} \right\|} _F}{\left\| {{\text{grad}}f\left( {\bf{x}} \right)} \right\|_2}.
\end{equation}
The aforementioned argument along with Equality (\ref{dist_exact}) show that $f$ is $L-C^1$ with $\beta=n\sum\limits_{i = 1}^m {{{\left\| {\frac{{\left( {{\mathbf{M}}_{{\Omega _i}}^T{{\mathbf{M}}_{{\Omega _i}}}} \right)}}{{\sqrt \varepsilon  }}} \right\|}_F}}$.
\\
{\bf{Condition  3;}} Let $R_{\bf{x}}$ be the restriction of retraction function $R$ to the tangent space $T_{\bf{x}}S^{n-1}$ which can be defined as ${R_{\bf{x}}}(\xi ) = \frac{{{\bf{x}} + \xi }}{{{{\left\| {{\bf{x}} + \xi } \right\|}_2}}}, \forall\xi\in T_{\bf{x}}S^{n-1}$ \cite{Absil_book}. It is not difficult to verify that for a given $\xi\in T_{\bf{x}}S^{n-1}$ the geodesic $\alpha(t)$, defined in Formula (\ref{Geodesic_Sphere}), satisfies $\alpha(0)={\bf{x}}$, $\alpha(1)=R_{\bf{x}}(\xi)$ when $\dot \alpha \left( 0 \right) = \frac{\xi }{{{{\left\| \xi  \right\|}_2}}}{\cos ^{ - 1}}\left( {\left\| {{\bf{x}} + \xi } \right\|_2^{ - 1}} \right)$. Then, using Equation (\ref{Dist}) we have:
\begin{equation}
{\text{dist}}\left( {{\bf{x}},{R_{\bf{x}}}(\xi )} \right) = {\left\| {\dot \alpha \left( 0 \right)} \right\|_2} =  \left| {{{\cos }^{ - 1}}\left( {{{\left( {1 + \left\| \xi  \right\|_2^2} \right)}^{\frac{{ - 1}}{2}}}} \right)} \right|.
\end{equation}\\
Now, let us find $\delta_\mu$ and $\mu$ for sake of inequality (\ref{Retraction_Condition}). It is easy to numerically verify that for $\mu=1$ and any arbitrary $\delta_\mu\ge0$,  inequality (\ref{Retraction_Condition}) holds.
\\
{\bf{Condition 4;}} Let us evaluate radially $L-C^1$ property of $\widehat{f}$. We have:
\begin{equation}
\frac{d}{{d\tau }}{{\hat f}_{\bf{x}}}\left( {\tau \xi } \right) = \frac{{\partial \gamma }}{{\partial \tau }}\frac{\partial }{{\partial \gamma }}f\left( \gamma  \right),
\end{equation}
where $\gamma  = \frac{{\left( {{\bf{x}} + \tau \xi } \right)}}{{{{\left\| {{\bf{x}} + \tau \xi } \right\|}_2}}}$. Now, using the fact that ${\bf{x}}$ is orthogonal to $\xi$, $\|{\bf{x}}\|_2=\|\xi\|_2=1$ and some simple calculus, for any arbitrary $t>0$ we obtain that:
\begin {equation}
\begin{gathered}
  \left| {\frac{d}{{d\tau }}{{\hat f}_{\bf{x}}}\left( {\tau \xi } \right){|_{\tau  = t}} - \frac{d}{{d\tau }}{{\hat f}_{\bf{x}}}\left( {\tau \xi } \right){|_{\tau  = 0}}} \right| \leqslant  \hfill \\
  \,\,\,\,\,\,\,t\left( {\sum\limits_{i = 1}^m {\left\| {{{\mathbf{M}}_{{\Omega _i}}}} \right\|_2^3 + \left\| {{{\mathbf{M}}_{{\Omega _i}}}} \right\|_2^2} } \right) \hfill \\ 
\end{gathered}.
\end{equation}
Meaning that radially $L-C^1$ condition has been satisfied with ${\beta _{RL}} = t\left( {\sum\limits_{i = 1}^m {\left\| {{{\mathbf{M}}_{{\Omega _i}}}} \right\|_2^3 + \left\| {{{\mathbf{M}}_{{\Omega _i}}}} \right\|_2^2} } \right)$.
\\
{\bf{Condition 5;}}Let us consider the symmetric operator $H_k$ be Hessian of function $f$ at point ${\bf{x}}_k$. Then, based on Definition \ref{Hessian}, the fact that $\|H_k\|\le\|H_k\|_F$ and also an argument discussed in verifying Condition 2, we just require to prove the boundedness of $\|{\rm{grad}}f({\bf{x}}_k)\|$ (see Inequality (\ref{Hessian_Verify})). Based on Equation (\ref{gradient_Project}) and using the Eulidean gradient of function $f$, as defined in Equation (\ref{Euclid_Grad_f}), we will have:
\begin{equation}
\begin{gathered}
  {\left\| {{\text{grad}}f\left( {\bf{x}} \right)} \right\|_2} = {\left\| {\left( {I - {\bf{x}}{{\bf{x}}^T}} \right){\text{Grad}}f\left( {\bf{x}} \right)} \right\|_2} \hfill \\
  \,\,\,\,\,\,\,\,\,\,\,\,\,\,\,\,\,\,\,\,\,\,\,\,\,\, \leqslant \frac{n}{{\sqrt \varepsilon  }}\sum\limits_{i = 1}^m {\left\| {{{\mathbf{M}}_{{\Omega _i}}}} \right\|_2^2}  \hfill \\ 
\end{gathered}. 
\end{equation}
This results in
\begin{equation}
\begin{gathered}
\left\| {{H_k}} \right\| \leqslant {\left\| {{H_k}} \right\|_F} \leqslant {\beta _H} = \frac{{{n^2}}}{\varepsilon }{\left( {\sum\limits_{i = 1}^m {\left\| {{{\mathbf{M}}_{{\Omega _i}}}} \right\|_2^2} } \right)^2}
\end{gathered}. 
\end{equation}
\\
{\bf{Condition 6;}} It is shown in \cite{Absil_book} that using the truncated conjugate gradient method (see Alg. 11 of \cite{Absil_book}), Inequality (\ref{Cauchy}) is satisfied with $c_1=1/2$.\\
The proof is complete.
\end {proof}

\section {Simulation Results} \label{Simulation}

Hamming distance of the original haplotype and its estimation, denoted by $\rm{hd}$, is our criterion to evaluate the performance of different methods. Please note that in HAP, it does not matter that the original haplotype is either $\bf{h}$ or $-\bf{h}$. Accordingly, to obtain $\rm{hd}$ we calculate the Hamming distance of the estimated haplotype with  both $\bf{h}$ and $-\bf{h}$ and choose the minimum one. Simulations are performed using synthetic data created by random generation of bipolar vectors ${\bf{h}}_{n\times1}$ and ${\bf{c}}_{m\times1}$ to construct ${\bf{M}}={\bf{c}}_{m\times1}{\bf{h}}_{n\times1}^T$. The observation set $\Omega$ is randomly produced with the probability of observation $\rm{pd}$. Also, some observed samples are erroneous  after changing their original sign; we show the set of erroneous samples by $\Omega_E$, where $\Omega_E\in\Omega$. 
 Fig. \ref{hd_pd} is depicted  for $m=250$, $n=300$, and $0.25\leq $\rm{pd}$\leq 0.7$.  Moreover, ${{\left| \Omega_E  \right|} \mathord{\left/
 {\vphantom {{\left| \Omega  \right|} {\left| {{\Omega _E}} \right|}}} \right.
 \kern-\nulldelimiterspace} {\left| {{\Omega}} \right|}}$ is set to $0.35$, where $\left|\cdot\right|$ denotes the cardinality of a set.  As seen, our method outperforms the other approaches by generating lower $\rm{hd}$.
\begin{figure}[!ht]
\centering
\begin{minipage}[b]{0.4\textwidth}
    \includegraphics[width=\textwidth]{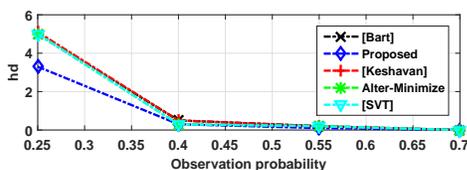}
  \caption{\label{hd_pd} Hamming distance.}
\end{minipage}
\hfill
\end{figure}

\section{Conclusion}\label{section.conclusion}
In this letter we proposed a new method for haplotype estimation. We properly modeled HAP over an (n-1)-dimensional Sphere. We  also discussed the convergence of a Riemannian trust region method. Simulation results confirmed our method outperforms some of the other haplotype assembly methods.



\bibliographystyle{ieeetr}
\bibliography{Hap_SPL_MMM_Manifold_97_08_05_Refrence}

\end{document}